\newcommand{\prob}[1]{\mathbb{P}\left( #1 \right)}
\newtheorem{theorem}{Theorem}
\newtheorem{remark}[theorem]{Remark}
\newtheorem{definition}[theorem]{Definition}
\def\unprotectedboldentry#1{\textcolor{Red}{\large{#1}}}
\def\boldentry{\protect\unprotectedboldentry}
\newcommand{\tikztableauinternal}[1]{
    \def\newtableau{#1}
    \coordinate (x) at (-0.5,0.5);
    \coordinate (y) at (-0.5,0.5);
    \foreach \row in \newtableau {
        \coordinate (x) at ($(x)-(0,1)$);
        \coordinate (y) at (x);
        \foreach \entry in \row {
            \ifthenelse{\equal{\entry}{X}}
               {
                \node (y) at ($(y) + (1,0)$) {};
                \fill[color=gray!10] ($(y)-(0.5,0.5)$) rectangle +(1,1);
                \draw[color=gray, dotted] ($(y)-(0.5,0.5)$) rectangle +(1,1);
               }
               {
                \ifthenelse{\equal{\entry}{\boldentry X}}
                   {
                    \node (y) at ($(y) + (1,0)$) {};
                    \fill[color=gray] ($(y)-(0.5,0.5)$) rectangle +(1,1);
                    \draw ($(y)-(0.5,0.5)$) rectangle +(1,1);
                   }
                   {
                    \node (y) at ($(y) + (1,0)$) {\entry};
                    \draw ($(y)-(0.5,0.5)$) rectangle +(1,1);
                   }
               }
            }
        }
}
\newlength\cellsize \setlength\cellsize{12\unitlength}
\newcommand\cellify[1]{\def\thearg{#1}\def\nothing{}%
\ifx\thearg\nothing
\vrule width0pt height\cellsize depth0pt\else
\hbox to 0pt{\usebox2\hss}\fi%
\vbox to 12\unitlength{
\vss
\hbox to 12\unitlength{\hss$#1$\hss}
\vss}}
\newcommand\tableau[1]{\vtop{\let\\=\cr
\setlength\baselineskip{-12000pt}
\setlength\lineskiplimit{12000pt}
\setlength\lineskip{0pt}
\halign{&\cellify{##}\cr#1\crcr}}}
\title{Continuity for Limit Profiles of Reversible Markov Chains}
\date{}
\author{Evita Nestoridi$^{\ast}$
}
\begin{document}

\maketitle

\begin{abstract}
We prove that the limit profile of a sequence of reversible Markov chains exhibiting total variation cutoff is a continuous function, under a computable condition involving the spectrum of the transition matrix and the cutoff window.

\end{abstract}


\let\thefootnote\relax
\footnotetext{  $^{\ast}$ \textit{Stony Brook University, United States. email}: evrydiki.nestoridi@stonybrook.edu, \\ 
\phantom{.} \hspace{0.85cm}Supported by the Simons Foundation Travel Support for Mathematicians award, MPSTSM00007955.}

\section{Introduction}
A central question in Markov chain mixing is the occurrence of cutoff, a phenomenon according to which a Markov chain converges abruptly to the stationary measure, see for instance \cite{BHP, DLP, PeSa, JS}. The focus of this paper is the limit profile of a Markov chain that exhibits cutoff, which captures the exact shape of the distance of the Markov chain from stationarity. Developments in studying limit profiles include introducing techniques for determining the limit profiles of Markov chains \cite{Teyssier, NT, lcomp, NOT} and determining the limit profile for famous Markov chains \cite{BaD,Teyssier, NT, Zhang, BuN, Lacoin, LP}. In this paper, we study the continuity properties of limit profiles.


 Let $X$ be a finite state space, and let $P$ be the transition matrix of an aperiodic and irreducible Markov chain on $X$. In other words, for $x,y \in X,$ the entry $P^t(x,y)$ is the probability of the walk starting at $x$ to be at $y$ after $t$ steps, where $t \in \{1, 2 , \ldots\}$.
We will consider the continuous time random walk associated with $P$, by considering the heat kernel $Q^t=e^{-t(I-P)}$. The probability measure $Q_x^t(\cdot)=Q^t(x, \cdot)$ converges to a unique measure $\pi(\cdot)$ on $X$ as $t$ goes to infinity.
We study this convergence with respect to total variation distance, which is defined as
$$d_x(t)=\Vert Q_x^t- \pi \Vert_{\textup{T.V.}} := \frac{1}{2} \sum_{y \in X} \vert Q_x^t(y)- \pi(y) \vert, $$
for $x \in X$.
\begin{definition}
Let $\varepsilon \in (0,1)$. The $\varepsilon$--mixing time with respect to the total variation distance, when starting at a state $x \in X,$ is defined as 
$$t^x_{\textup{mix}}(\varepsilon):= \inf \{ t :   d_x(t)   \leq \varepsilon \}.$$
\end{definition}

Cutoff describes a phase transition: as we run the family of Markov chains on $X_n$, the total variation distance is almost equal to $1$, and then suddenly it drops and approaches zero as $n$ goes to infinity. The formal definition of cutoff is given below.
\begin{definition}\label{cutoff}
A family of Markov chains on $X_n$ is said to have cutoff at time $t_n$ with window $w_n=o(t_n)$ if and only if
$$\lim_{c \rightarrow \infty} \lim_{n \rightarrow \infty} d_x^{(n)}(t_n-cw_n)= 1 \mbox{ and } \lim_{c \rightarrow \infty} \lim_{n \rightarrow \infty} d_x^{(n)}(t_n+cw_n)= 0,$$ 
where $d_x^{(n)}(t)$ denotes the total variation distance of the $n$--th Markov chain after $t$ steps starting at $x$.
\end{definition}

Given a Markov chain exhibiting cutoff, one can ask for more precise control on the exact distance from stationarity. This is known as the limit profile with respect to the sequences $(t_n)$ and $(w_n)$, defined as:
\begin{align} \label{limitprofile} \Phi_x(c):=\lim_{n \rightarrow \infty} d_x^{(n)}\left( t_{n} + c w_{n} \right), \mbox{ for all $c \in \mathbb{R}$, } 
\end{align}
if this limit exists.

The limit profile is known for only a few  Markov chains, such as the riffle shuffle \cite{BaD}, the asymmetric exclusion process on the segment \cite{BuN}, the simple exclusion process on the cycle \cite{Lacoin}, and the simple random walk on Ramanujan graphs \cite{LP}, etc. Teyssier \cite{Teyssier} determined the limit profile for random transpositions. Using representation theory of the symmetric group $S_n$, he used Fourier transform arguments for studying limit profiles that work for random walks on groups using a generating set that is a conjugacy class. The same limit profile is proved to be true for the $k$-cycle card shuffle \cite{NT}, under the assumption that $k=o(n)$ and for star transposition \cite{lcomp}. Olesker--Taylor and Schmid also studied the limit profile for the Bernoulli--Laplace chain \cite{SOT}. In Section \ref{exa} we give a more extensive presentation of the above results.

Furthermore, if $P$ is aperiodic, irreducible and reversible on $X$, then the spectrum of $P$ satisfies
\[ -1<\beta_{\vert X \vert} \leq \ldots \leq \beta_2< \beta_1=1,\]
and there is an orthonormal eigenbasis $\{f_j:X_n \rightarrow \mathbb{R}\}$. The main assumption on $P$ is that there is a continuous (or bounded) function $g$ on $\mathbb{R}$ such that
 \begin{align}
 \label{cond} \limsup_{n \rightarrow \infty}  w_n^2\sum_{i=2}^{\vert X_n \vert} f_i(x)^2 
   \left( 1-\beta_i\right)^2  e^{-2(t_n + c w_n)(1-\beta_i)} \leq g(c),
 \end{align}
for all $c \in \mathbb{R}$.
\begin{theorem}\label{main}
Let $P_n$ be the transition matrix of a reversible Markov chain on $X_n$ that exhibits cutoff at $t_n$ with window $w_n$ and satisfies \eqref{cond}. Assume that $\Phi_x$, the total variation limit profile with respect to the sequences $(t_n)$ and $(w_n)$ at $x$, exists. Then $\Phi_x$ is continuous for every $x \in X$.
\end{theorem}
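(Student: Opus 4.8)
The plan is to prove continuity by controlling the increments of the finite-$n$ profiles $c \mapsto d_x^{(n)}(t_n + cw_n)$ uniformly in $n$ by an integral of $\sqrt{g}$, and then transferring this bound to the limit $\Phi_x$. First I would record the spectral expansion: writing $h_t(y) = Q^t(x,y)/\pi(y) - 1$ and using reversibility with the orthonormal eigenbasis $\{f_i\}$ (where $\beta_1 = 1$ and $f_1 \equiv 1$), one has
$$h_t(y) = \sum_{i=2}^{|X_n|} e^{-t(1-\beta_i)} f_i(x) f_i(y),$$
so that $d_x(t) = \tfrac12 \sum_y \pi(y) |h_t(y)|$. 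For each fixed $y$ this is a finite exponential sum in $t$, so $t \mapsto d_x(t)$ is a finite sum of absolute values of real-analytic functions; it is therefore Lipschitz on compact intervals, hence absolutely continuous, and the fundamental theorem of calculus applies.

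The central estimate comes next. Differentiating term by term and bounding the sign factors by $1$ gives $|d_x'(t)| \le \tfrac12 \sum_y \pi(y)\,|\partial_t h_t(y)|$, and then Cauchy--Schwarz (Jensen against the probability measure $\pi$) together with orthonormality $\sum_y \pi(y) f_i(y) f_j(y) = \delta_{ij}$ collapses the cross terms, yielding
$$|d_x'(t)| \le \tfrac12 \left( \sum_{i=2}^{|X_n|} (1-\beta_i)^2 e^{-2t(1-\beta_i)} f_i(x)^2 \right)^{1/2}.$$
This is precisely the quantity in \eqref{cond}: substituting $t = t_n + cw_n$ and multiplying by $w_n$, the bracket becomes $A_n(c) := w_n^2 \sum_{i\ge 2} f_i(x)^2 (1-\beta_i)^2 e^{-2(t_n+cw_n)(1-\beta_i)}$, whose $\limsup_n$ is at most $g(c)$ by hypothesis.

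For fixed $c_2 < c_1$, integrating the derivative bound and changing variables $t = t_n + cw_n$ gives
$$\left| d_x^{(n)}(t_n + c_1 w_n) - d_x^{(n)}(t_n + c_2 w_n) \right| \le \tfrac12 \int_{c_2}^{c_1} \sqrt{A_n(c)}\, dc.$$
The step I expect to be the main obstacle is interchanging $\limsup_n$ with this integral. To handle it I would exploit that each summand $e^{-2(t_n+cw_n)(1-\beta_i)}$ is decreasing in $c$ (since $1-\beta_i > 0$), so $A_n(\cdot)$ is nonincreasing and $A_n(c) \le A_n(c_2)$ on $[c_2,c_1]$; because $\limsup_n A_n(c_2) \le g(c_2) < \infty$, the integrands are dominated by the constant $\sqrt{g(c_2)+1}$ for all large $n$. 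A reverse Fatou (dominated) argument, together with continuity and monotonicity of the square root, then yields
$$\limsup_n \int_{c_2}^{c_1} \sqrt{A_n(c)}\, dc \le \int_{c_2}^{c_1} \sqrt{\limsup_n A_n(c)}\, dc \le \int_{c_2}^{c_1} \sqrt{g(c)}\, dc.$$

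Finally, since the profile exists the left-hand side of the increment bound converges to $|\Phi_x(c_1) - \Phi_x(c_2)|$, so
$$|\Phi_x(c_1) - \Phi_x(c_2)| \le \tfrac12 \int_{c_2}^{c_1} \sqrt{g(c)}\, dc .$$
As $g$ is continuous (or bounded), $\sqrt{g}$ is locally bounded and the right-hand side tends to $0$ as $c_1 \to c_2$; indeed, if $g \le M$ on the relevant interval one obtains the local Lipschitz estimate $|\Phi_x(c_1) - \Phi_x(c_2)| \le \tfrac12 \sqrt{M}\,|c_1 - c_2|$. This establishes continuity of $\Phi_x$ for every $x \in X$, completing the argument.
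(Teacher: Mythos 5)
Your proof is correct, and at its core it is the same argument as the paper's: both pass through the spectral decomposition, use Cauchy--Schwarz together with orthonormality of the $f_i$ to reduce the total-variation increment to the quantity $w_n^2\sum_{i\geq 2} f_i(x)^2(1-\beta_i)^2 e^{-2(t_n+cw_n)(1-\beta_i)}$ appearing in \eqref{cond}, and then invoke the hypothesis on $g$ to conclude. The only genuine difference is how the time increment of the exponentials is controlled. The paper first bounds $\vert d_x(s_1)-d_x(s_2)\vert \leq \Vert Q^{s_1}_x - Q^{s_2}_x\Vert_{\textup{T.V.}}$ and applies the Mean Value Theorem to each function $e^{-s(1-\beta_i)}$, using monotonicity of the exponential to replace the intermediate point by the left endpoint $s_1$; this produces the pointwise-in-$n$ bound \eqref{bb}, whose $\limsup$ can be taken directly, so no interchange of limit and integral ever arises. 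You instead differentiate $d_x$ itself (correctly justified via local Lipschitzness and absolute continuity, since $\vert h_t(y)\vert$ is only a.e.\ differentiable), integrate the derivative bound, and then must exchange $\limsup_n$ with the integral --- your monotonicity-plus-domination (reverse Fatou) argument handles this step correctly, since $A_n(c)\leq A_n(c_2)$ on $[c_2,c_1]$ and $\limsup_n A_n(c_2)\leq g(c_2)<\infty$. What your route buys is the slightly cleaner conclusion $\vert\Phi_x(c_1)-\Phi_x(c_2)\vert \leq \tfrac12\int_{c_2}^{c_1}\sqrt{g(c)}\,dc$, which makes the local Lipschitz continuity of $\Phi_x$ explicit; what it costs is two extra technical justifications (a.e.\ differentiability and the $\limsup$/integral interchange) that the paper's MVT shortcut avoids entirely.
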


For transitive chains, \eqref{cond} can be simplified. Assume that there is a continuous (or bounded) function $g$ on $\mathbb{R}$ such that
 \begin{align}
 \label{cond3} \limsup_{n \rightarrow \infty}  w_n^2\sum_{i=2}^{\vert X_n \vert} 
   \left( 1-\beta_i\right)^2  e^{-2(t_n + c w_n)(1-\beta_i)} \leq g(c),
 \end{align}

\begin{theorem}\label{main3}
Let $P_n$ be the transition matrix of a transitive, reversible Markov chain on $X_n$ that exhibits cutoff at $t_n$ with window $w_n$ and satisfies \eqref{cond3}. Assume that $\Phi_x$, the total variation limit profile with respect to the sequences $(t_n)$ and $(w_n)$ at $x$, exists. Then $\Phi_x$ is continuous for every $x \in X$.
\end{theorem}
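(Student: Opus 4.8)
The plan is to prove that the rescaled distance functions $h^{(n)}_x(c):=d^{(n)}_x(t_n+cw_n)$ form an equicontinuous family on each compact interval, with a modulus of continuity that is uniform in $n$. Since $h^{(n)}_x\to\Phi_x$ pointwise by hypothesis, passing to the limit in a uniform increment bound will transfer that bound directly to $\Phi_x$, giving (local Lipschitz, hence) continuity. So the whole argument reduces to estimating the time-increments of $d_x$ using the spectrum, and it is here that condition \eqref{cond3} enters.

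For the core estimate I would start from the reversible spectral decomposition
\[ \frac{Q^t(x,y)}{\pi(y)}-1=\sum_{i=2}^{|X_n|}e^{-t(1-\beta_i)}f_i(x)f_i(y)=:F(t,y), \]
so that $d_x(t)=\tfrac12\sum_y\pi(y)\,|F(t,y)|$. Differentiating in $t$ (the map $t\mapsto|F(t,y)|$ is piecewise real-analytic, so $d_x$ is locally Lipschitz and the fundamental theorem of calculus applies) and using $\bigl|\tfrac{d}{dt}|F|\bigr|\le|\partial_t F|$, a Cauchy--Schwarz step in $y$ against the weight $\pi$ followed by the orthonormality relation $\sum_y\pi(y)f_i(y)f_j(y)=\delta_{ij}$ collapses the cross terms and yields $\bigl|\tfrac{d}{dt}d_x(t)\bigr|\le\tfrac12\bigl(\sum_{i\ge2}f_i(x)^2(1-\beta_i)^2e^{-2t(1-\beta_i)}\bigr)^{1/2}$. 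For the transitive case I would then exploit that $d_x(t)=d(t)$ is independent of $x$: averaging this inequality over $x$, moving the average inside the square root by Jensen/Cauchy--Schwarz, and using that the stationary distribution is uniform so that $\frac{1}{|X_n|}\sum_x f_i(x)^2=1$, the weights $f_i(x)^2$ disappear and I obtain
\[ w_n\Bigl|\tfrac{d}{dt}d(t)\Bigr|\le\tfrac12\Bigl(w_n^2\sum_{i=2}^{|X_n|}(1-\beta_i)^2e^{-2t(1-\beta_i)}\Bigr)^{1/2}, \]
whose argument is exactly the quantity controlled by \eqref{cond3}.

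Writing $A_n(s)$ for that bracketed spectral sum evaluated at $t=t_n+sw_n$, the decisive structural observation is that each summand, and hence $A_n$, is monotone decreasing in $s$. Thus for $c'\le c$ in a compact interval the fundamental theorem of calculus gives $|h^{(n)}_x(c)-h^{(n)}_x(c')|\le\tfrac12\int_{c'}^{c}\sqrt{A_n(s)}\,ds\le\tfrac12\sqrt{A_n(c')}\,|c-c'|$. Taking $\limsup_n$ and invoking the existence of the profile (so the left side tends to $|\Phi_x(c)-\Phi_x(c')|$) together with \eqref{cond3} (so $\limsup_n A_n(c')\le g(c')$) yields $|\Phi_x(c)-\Phi_x(c')|\le\tfrac12\sqrt{g(c')}\,|c-c'|$; monotonicity lets me replace $g(c')$ by $g(a)$ on $[a,b]$, producing a genuine local Lipschitz bound and hence continuity on all of $\mathbb{R}$.

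I expect the main obstacle to be interchanging the limit in $n$ with the integration in $s$: hypothesis \eqref{cond3} controls only the $\limsup$ of $A_n$ pointwise in $s$, so without extra structure one cannot push the $\limsup$ through the integral. The monotonicity of $A_n(\cdot)$ is precisely what resolves this, dominating the integrand on the whole interval by its left-endpoint value before passing to the limit. A secondary technical point, best isolated as a short lemma, is justifying the termwise differentiation and the inequality $\bigl|\tfrac{d}{dt}|F|\bigr|\le|\partial_t F|$ at the measure-zero set of times where some coordinate $F(t,y)$ vanishes; alternatively one can avoid differentiation entirely by bounding $d_x(t)-d_x(t')$ through $\Vert Q^t_x-Q^{t'}_x\Vert_{\textup{T.V.}}$ and applying the mean value theorem to $e^{-t(1-\beta_i)}$, which gives the same estimate. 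Finally I would stress that the averaging step is genuinely needed: \eqref{cond3} does not imply \eqref{cond} pointwise in $x$, since the weights $f_i(x)^2$ need not be bounded by $1$, so Theorem \ref{main3} does not follow formally from Theorem \ref{main} and transitivity must be used.
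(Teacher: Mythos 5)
Your proposal is correct and takes essentially the same route as the paper: an increment bound from the spectral decomposition via Cauchy--Schwarz and orthonormality, followed by averaging over $x$ (transitivity makes the left side constant and the uniform stationary measure kills the weights $f_i(x)^2$), and then condition \eqref{cond3} to conclude. Your derivative/FTC-plus-monotonicity formulation is only a cosmetic variant of the paper's Mean Value Theorem step (the paper's use of $s_i \geq s_1$ is exactly your monotonicity observation), as you yourself note when you point out the MVT alternative gives the same estimate.
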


Teyssier has proven that any decreasing function can be attained as a limit profile (private communication), and thus an assumption such as \eqref{cond} or \eqref{cond3} is needed.
The main idea to prove Theorem \ref{main} is to bound the difference $\vert \Phi_x(c)- \Phi_x(d) \vert$ by an $\ell_2$ expression involving the spectrum of $P$, simplify this expression using the Mean Value Theorem and use the assumptions to prove that this upper bound goes to zero as $c $ goes to $d$.

In Section \ref{prem}, we give the necessary setup. The proof of Theorem \ref{main} can be found in Section \ref{next}. In Section \ref{disc} we consider the discrete time limit profile. Passing through the spectral decomposition, we explain how to prove that the discrete limit profile is also continuous (see Theorem \ref{main2}). Section \ref{tran} discusses the proof of Theorem \ref{main3} for transitive chains, both for continuous and discrete time chains. Section \ref{exa} discusses a variety of examples, some of them satisfying the assumptions of Theorems \ref{main} and \ref{main2} (and therefore have continuous limit profiles) and some that don't, but still have continuous limit profiles.

\section{Preliminaries}\label{prem}
Let $P$ be an irreducible Markov chain on $X_n$, that is reversible with respect to the stationary measure $\pi$. $P$ has real eigenvalues satisfying
\[ -1\leq \beta_{\vert X_n \vert}\leq \ldots \leq\beta_2<\beta_1=1,\]
and there is an orthonormal eigenbasis $\{ f_i : X_n \rightarrow \mathbb{C}\}$, which we can use to write
\[Q^t_x(y) = \pi(y)\sum_{i=1}^{\vert X_n \vert} f_i(x) f_i(y) e^{-t(1-\beta_i)},\]
as explained in Lemma 12.2 of \cite{LPW}. 

Let $c_1 < c_2 $ be two real numbers. We set $s_1=t_n +c_1 w_n$ and $s_2=t_n +c_2 w_n$ and the orthonormality of the $f_i$'s gives
\begin{align}
  \label{l2}  \bigg \Vert\frac{Q^{s_1}_x- Q^{s_2}_x}{\pi} \bigg \Vert_{2,\pi}^2= \sum_{i=1}^{\vert X_n \vert} f_i(x)^2 \left( e^{-s_1(1-\beta_i)} - e^{-s_2(1-\beta_i)}\right)^2,
\end{align}
where $\Vert \cdot \Vert_2$ is the $\ell_2$ norm with respect to the stationary measure $\pi$.
Using the triangle inequality, we get 
\begin{align*}
     \vert d_x(s_1)- d_x(s_2)\vert \leq \Vert Q^{s_1}_x- Q^{s_2}_x\Vert_{T.V}.
    \end{align*}
An application of the Cauchy-Schwarz inequality, thus gives
\begin{align*}
    4 \vert d_x(s_1)- d_x(s_2)\vert^2 \leq \bigg \Vert\frac{Q^{s_1}_x- Q^{s_2}_x}{\pi} \bigg \Vert_{2,\pi}^2.
    \end{align*}
    We use \eqref{l2} to rewrite this as
\begin{align*}
    4 \vert d_x(s_1)- d_x(s_2)\vert^2 \leq \sum_{i=1}^{\vert X_n \vert} f_i(x)^2 \left( e^{-s_1(1-\beta_i)} - e^{-s_2(1-\beta_i)}\right)^2.
\end{align*}
Since $\beta_1 =1$, the above sum really starts from $i=2$. The Mean Value Theorem for the functions $h_i(x)=e^{-x(1-\beta_i)} $ gives that there are $d_i=d_i(n) \in ( c_1, c_2)$ such that
    \begin{align*}
   4\vert d_x(s_1)- d_x(s_2)\vert^2 \leq (c_2-c_1)^2 w_n^2\sum_{i=2}^{\vert X_n \vert} f_i(x)^2 
   \left( 1-\beta_i\right)^2  e^{-2s_i(1-\beta_i)},
    \end{align*}
    where $s_i= t_n +d_iw_n$. Using the fact that $s_i \geq s_1$ we have that
    \begin{align}
    \label{bb}4\vert d_x(s_1)- d_x(s_2)\vert^2 \leq (c_2-c_1)^2 w_n^2\sum_{i=2}^{\vert X_n \vert} f_i(x)^2 
   \left( 1-\beta_i\right)^2  e^{-2s_1(1-\beta_i)}.
    \end{align}

\section{The proof of Theorem \ref{main}}\label{next}
\begin{proof}[Proof of Theorem \ref{main}]
Let $P$ be the transition matrix of a Markov chain on $X_n$ satisfying the conditions of Theorem \ref{main}.
Similarly as in Section \ref{prem}, let $c_1< c_2 $ be two real numbers. We set $s_1=t_n +c_1 w_n$ and $s_2=t_n +c_2 w_n$.  

Letting $n \rightarrow \infty$ in \eqref{bb}, and using the assumption of Theorem \ref{main},
 \begin{align*}
     4\vert \Phi_x(c_1) -  \Phi_x(c_2)\vert^2
    & \leq  g(c_1) \left(c_1-c_2\right)^2.
    \end{align*}
Letting $c_1 \rightarrow c_2$ or ($c_2 \rightarrow c_1$) on both sides will also push $c_3 \rightarrow c_2$ or $c_1$ respectively. The continuity of $g$ shows that $\Phi_x$ is continuous.




\end{proof}
\section{The discrete time limit profile}\label{disc}
We can also consider mixing times and the limit profiles for discrete time Markov chains and discuss continuity as follows. Recall that $P$ is the transition matrix of an aperiodic and irreducible Markov chain on $X$, which is reversible with respect to the stationary measure $\pi$.
The total variation distance from stationarity is defined as
$$d_x(t)=\Vert P_x^t- \pi \Vert_{\textup{T.V.}} := \frac{1}{2} \sum_{y \in X} \vert P_x^t(y)- \pi(y) \vert $$
for $x \in X$.
The mixing time with respect to the total variation distance, when starting at $x \in X$ is defined as 
$$t^x_{\textup{mix}}(\varepsilon):= \min \{ t :   d_x(t)   \leq \varepsilon \},$$
for every $\varepsilon \in (0,1)$.
Cutoff and the limit profile are defined just as in Definition \ref{cutoff} and \eqref{limitprofile}. We will now give an alternative definition of limit profiles, so that we can allow $c$ to be a real number as supposed to an integer.

Since $P$ is an irreducible and aperiodic Markov chain in $X_n$ that is reversible with respect to the stationary measure $\pi$, $P$ has real eigenvalues that satisfy
\[ -1\leq \beta_n\leq \ldots \leq\beta_2<\beta_1=1\]
and there is an orthonormal eigenbasis $f_i$, which we can use to write
\[P^t_x(y) = \pi(y)\sum_{i=1}^{\vert X_n \vert} f_i(x) f_i(y) \beta_i^t\]
as explained in Lemma 12.2 of \cite{LPW}. Therefore,
\[d_x(t) = \frac{1}{2}\sum_{y \in X_n} \pi(y) \bigg \vert \sum_{i=2}^{\vert X_n \vert} f_i(x) f_i(y) \beta_i^t \bigg \vert .\]
If $P$ exhibits cutoff at $t_n$ with window $w_n$, we can define the discrete time limit profile with respect to the sequences $(t_n)$ and $(w_n)$ as
\[\Phi(c):= \lim_{n \rightarrow \infty}\frac{1}{2}\sum_{y \in X_n} \pi(y) \bigg \vert \sum_{i=2}^{\vert X_n \vert} f_i(x) f_i(y) \beta_i^{t_n +c w_n} \bigg \vert ,\]
if it exists. Under the assumption that all eigenvalues of $P$ are nonnegative, the limit profile can be viewed as a function whose domain is the real numbers and the following theorem makes sense for discrete Markov chains.

\begin{theorem}\label{main2}
Let $P_n$ be the transition matrix of a reversible Markov chain on $X_n$ that exhibits cutoff at $t_n$ with window $w_n$, whose eigenvalues are nonnegative. Assume that there is a continuous function $g$ (or just bounded) on $\mathbb{R}$ such that
\begin{align}
\label{cond2}
\limsup_{n\rightarrow \infty}w_n \bigg \lbrace \sum_{i=2}^{\vert X_n \vert} f^2_i(x)  \log^2 ( \beta_i )  \beta_i^{2(t_n + c w_n)} \bigg \rbrace^{1/2}  \leq g(c),
\end{align}
for every $c \in \mathbb{R}$. Assume that $\Phi_x$, the discrete time limit profile with respect to the sequences $(t_n)$ and $(w_n)$ at $x$, exists. Then $\Phi_x$ is continuous for every $x \in X$.
\end{theorem}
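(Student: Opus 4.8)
The plan is to mirror, step for step, the argument of Sections \ref{prem} and \ref{next}, replacing the heat--kernel factor $e^{-t(1-\beta_i)}$ by $\beta_i^t$ throughout. The essential new ingredient is that the hypothesis that all eigenvalues are nonnegative lets us make sense of $\beta_i^s$ for real (not merely integer) exponents $s$: writing $\beta_i^s = e^{s\log\beta_i}$ for $\beta_i \in (0,1)$, and setting $\beta_i^s = 0$ for $\beta_i = 0$ and $s>0$, each map $s \mapsto \beta_i^s$ is a smooth, non-increasing function of $s\in\mathbb{R}$. This is exactly what is needed to extend $d_x$ to real arguments via the formula defining $\Phi(c)$, and thus to let $c_1,c_2$ range over the reals.

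First I would fix reals $c_1<c_2$, set $s_1=t_n+c_1 w_n$ and $s_2=t_n+c_2 w_n$, and record the $\ell_2$ identity that follows from orthonormality of the $f_i$:
\[\bigg\Vert \frac{\widetilde P^{s_1}_x - \widetilde P^{s_2}_x}{\pi}\bigg\Vert_{2,\pi}^2 = \sum_{i=2}^{|X_n|} f_i(x)^2 \left(\beta_i^{s_1} - \beta_i^{s_2}\right)^2,\]
where $\widetilde P^{s}_x(y) := \pi(y)\sum_{i} f_i(x) f_i(y)\beta_i^{s}$ is the extension of the transition kernel to real exponents and the $i=1$ term drops since $\beta_1=1$. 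The reverse triangle inequality $\bigl|\,|a|-|b|\,\bigr|\le|a-b|$ applied inside the definition of $d_x$ gives $|d_x(s_1)-d_x(s_2)| \le \Vert \widetilde P^{s_1}_x - \widetilde P^{s_2}_x\Vert_{\textup{T.V.}}$, and Cauchy--Schwarz with respect to $\pi$ then yields $4|d_x(s_1)-d_x(s_2)|^2 \le \sum_{i\ge 2} f_i(x)^2(\beta_i^{s_1}-\beta_i^{s_2})^2$, exactly as in \eqref{bb}.

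Next I would apply the Mean Value Theorem to $h_i(s)=\beta_i^{s}$, whose derivative is $h_i'(s)=\log(\beta_i)\,\beta_i^{s}$. This produces $d_i=d_i(n)\in(c_1,c_2)$ and $s_i=t_n+d_i w_n$ with $\beta_i^{s_1}-\beta_i^{s_2}=(c_1-c_2)w_n\log(\beta_i)\beta_i^{s_i}$, hence
\[4|d_x(s_1)-d_x(s_2)|^2 \le (c_1-c_2)^2 w_n^2 \sum_{i=2}^{|X_n|} f_i(x)^2 \log^2(\beta_i)\,\beta_i^{2s_i}.\]
Because every $\beta_i$ with $i\ge 2$ lies in $[0,1)$ and $s_i\ge s_1$, monotonicity gives $\beta_i^{2s_i}\le \beta_i^{2s_1}$, so I may replace $s_i$ by $s_1$; the zero eigenvalues contribute nothing since $\beta^2\log^2\beta\to 0$ as $\beta\to 0^+$. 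Letting $n\to\infty$ and squaring the hypothesis \eqref{cond2} at $c=c_1$ gives $4|\Phi_x(c_1)-\Phi_x(c_2)|^2 \le (c_1-c_2)^2 g(c_1)^2$. Finally, letting $c_1\to c_2$ (or $c_2\to c_1$) forces the right--hand side to $0$, since $g$ is continuous (or merely bounded), which establishes continuity of $\Phi_x$ at every point.

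The step requiring the most care --- the only place where the discrete setting genuinely differs from the continuous one --- is the bookkeeping around $\beta_i^{s}$ for non--integer $s$: one must check that nonnegativity of the spectrum makes $h_i$ differentiable with $h_i'=\log(\beta_i)\beta_i^{s}$, that $\log^2(\beta_i)$ is precisely the factor appearing in \eqref{cond2}, and that the possible zero eigenvalues are harmless. Once these points are settled, the argument is a verbatim transcription of the continuous--time proof, and I expect no further obstacle.
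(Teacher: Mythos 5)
Your proposal is correct and follows essentially the same route as the paper's own proof: bound the difference of distances by the $\ell_2$ quantity via the (reverse) triangle inequality and Cauchy--Schwarz, apply the Mean Value Theorem to $s \mapsto \beta_i^{s}$, use $s_i \geq s_1$ and $0 \leq \beta_i < 1$ to replace $s_i$ by $s_1$, and then let $n \to \infty$ and $c_1 \to c_2$ using \eqref{cond2}. Your explicit treatment of the zero eigenvalues and of the real-exponent extension $\beta_i^{s} = e^{s\log\beta_i}$ is a point of care the paper leaves implicit, but it does not change the argument.
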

\begin{proof}
Let $c_1, c_2 \in \mathbb{R}$. We set $s_1=t_n +c_1 w_n$ and $s_2=t_n +c_2 w_n$ and we write
\begin{align}
 \label{one} \bigg \vert \sum_{y \in X_n} \pi(y) \bigg \vert \sum_{i=2}^{\vert X_n \vert} f_i(x) f_i(y) \beta_i^{s_1} \bigg \vert - \sum_{y \in X_n} \pi(y) \bigg \vert \sum_{i=2}^{\vert X_n \vert} f_i(x) f_i(y) \beta_i^{s_2} \bigg \vert \bigg \vert.
\end{align}
Using the triangle inequality, we have 
\begin{align*}
    \eqref{one} \leq  \sum_{y \in X_n} \pi(y) \bigg \vert \sum_{i=2}^{\vert X_n \vert} f_i(x) f_i(y) \left( \beta_i^{s_1}- \beta_i^{s_2} \right) \bigg \vert .
    \end{align*}
Just as in the continuous case, Cauchy--Schwarz and the orthonormality of the $f_j$'s gives 
\begin{align*}
    \eqref{one} & \leq  \bigg \lbrace \sum_{i=2}^{\vert X_n \vert} f^2_i(x)  \left( \beta_i^{s_1}- \beta_i^{s_2} \right)^2 \bigg \rbrace^{1/2}.
    \end{align*}
   The rest of the argument is very similar to the continuous case. The Mean Value Theorem for the functions $H_i(x)= \beta_i^x$ gives that there exist $d_i=d_i(n) \in (c_1,c_2)$ such that
    \begin{align*}
\eqref{one} & \leq  \vert c_2 - c_1 \vert w_n \bigg \lbrace \sum_{i=2}^{\vert X_n \vert} f^2_i(x)  \log^2 ( \beta_i )  \beta_i^{2s_i} \bigg \rbrace^{1/2}  \\
& \leq  \vert c_2 - c_1 \vert w_n \bigg \lbrace \sum_{i=2}^{\vert X_n \vert} f^2_i(x)  \log^2 ( \beta_i )  \beta_i^{2s_1} \bigg \rbrace^{1/2} ,
    \end{align*}
    where $s_3= t_n +d_i w_n$.
Letting $n \rightarrow \infty $, and \eqref{cond2} give that
\begin{align*}
    \vert \Phi_x(c_1)- \Phi_x(c_2) \vert \leq \frac{1}{2} \vert c_2 - c_1 \vert g(c_1),
\end{align*}
for a continuous function $g$. Letting $c_1 \rightarrow c_2$ or $c_2 \rightarrow c_1$ we get that $\Phi$ is a continuous function.
\end{proof}

\section{Transitive chains}\label{tran}
For transitive chains, \eqref{cond} and \eqref{cond2} respectively take a nicer form. If we average over $x \in X$ both sides of \eqref{bb}, the orthonormality of the $f_j$'s and transitivity will give
\begin{align}
    \label{bbb}4\vert d_x(s_1)- d_x(s_2)\vert^2 \leq (c_2-c_1)^2 w_n^2\sum_{i=2}^{\vert X_n \vert} 
   \left( 1-\beta_i\right)^2  e^{-2s_3(1-\beta_i)},
    \end{align}
    for every $x \in X$. The rest of the argument follows exactly as before. This is why
for the continuous time setup, the assumption on $P$ is that there is a continuous (or bounded) function $g$ on $\mathbb{R}$ such that
 \begin{align*}
 \limsup_{n \rightarrow \infty}  w_n^2\sum_{i=2}^{\vert X_n \vert} 
   \left( 1-\beta_i\right)^2  e^{-2(t_n + c w_n)(1-\beta_i)} \leq g(c),
 \end{align*}
for all $c \in \mathbb{R}$.

 Similarly, for transitive, discrete time Markov chains, we have the following.
 \begin{theorem}\label{main4}
Let $P_n$ be the transition matrix of a transitive, reversible Markov chain on $X_n$ that exhibits cutoff at $t_n$ with window $w_n$, and whose eigenvalues are nonnegative. Assume that there is a continuous function (or bounded) $g$ on $\mathbb{R}$ such that
\begin{align}
\label{cond4}
\limsup_{n\rightarrow \infty}w_n \bigg \lbrace \sum_{i=2}^{\vert X_n \vert}  \log^2 ( \beta_i )  \beta_i^{2(t_n + c w_n)} \bigg \rbrace^{1/2}  \leq g(c),
\end{align}
for every $c \in \mathbb{R}$. Assume that $\Phi_x$, the discrete time limit profile with respect to the sequences $(t_n)$ and $(w_n)$ at $x$, exists.
Then $\Phi_x$ is continuous for every $x \in X$.
\end{theorem}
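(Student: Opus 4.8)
The plan is to reduce to the one-variable estimate already carried out in the proof of Theorem \ref{main2} and then exploit transitivity to erase the factors $f_i^2(x)$, exactly as the passage from \eqref{bb} to \eqref{bbb} does in the continuous-time setting. Fix $c_1 < c_2$ and set $s_1 = t_n + c_1 w_n$, $s_2 = t_n + c_2 w_n$. Following the proof of Theorem \ref{main2} verbatim up to and including the Cauchy--Schwarz step, and then squaring, I obtain
\[
4\vert d_x(s_1) - d_x(s_2)\vert^2 \leq \sum_{i=2}^{\vert X_n\vert} f_i^2(x)\left(\beta_i^{s_1} - \beta_i^{s_2}\right)^2,
\]
which is simply the squared form of the bound on \eqref{one}.

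Next I would average this inequality over $x \in X_n$ against $\pi$. Because the chain is transitive, $d_x(t)$ does not depend on $x$, so the left-hand side is unaffected by the averaging; on the right-hand side the orthonormality of $\{f_i\}$ with respect to $\pi$ gives $\sum_{x}\pi(x)f_i^2(x) = 1$ for each $i$, whence
\[
4\vert d_x(s_1) - d_x(s_2)\vert^2 \leq \sum_{i=2}^{\vert X_n\vert}\left(\beta_i^{s_1} - \beta_i^{s_2}\right)^2 .
\]
From here the remainder is identical to the discrete-time argument of Theorem \ref{main2}: the Mean Value Theorem applied to $H_i(x) = \beta_i^x$ — which is where the hypothesis of nonnegative eigenvalues is used, so that these are genuine differentiable functions of a real exponent — produces $d_i = d_i(n) \in (c_1,c_2)$ such that, with $s_3 = t_n + d_i w_n$, one has $\left(\beta_i^{s_1} - \beta_i^{s_2}\right)^2 = (c_2-c_1)^2 w_n^2 \log^2(\beta_i)\,\beta_i^{2 s_3}$. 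Using $s_3 \geq s_1$ and $0 \leq \beta_i \leq 1$ to replace $\beta_i^{2s_3}$ by $\beta_i^{2 s_1}$ yields
\[
4\vert d_x(s_1) - d_x(s_2)\vert^2 \leq (c_2-c_1)^2 w_n^2 \sum_{i=2}^{\vert X_n\vert}\log^2(\beta_i)\,\beta_i^{2 s_1} .
\]

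Finally I would let $n \to \infty$, at which point the left-hand side converges to $4\vert\Phi_x(c_1) - \Phi_x(c_2)\vert^2$ by hypothesis, while the right-hand side is controlled by \eqref{cond4}, giving $\vert\Phi_x(c_1)-\Phi_x(c_2)\vert \leq \tfrac12\vert c_2 - c_1\vert\, g(c_1)$. Letting $c_1 \to c_2$ (equivalently $c_2 \to c_1$) and invoking the continuity, or merely the local boundedness, of $g$ forces the right-hand side to $0$, which establishes continuity of $\Phi_x$. I do not expect a genuine obstacle: the argument is the transitive specialization of Theorem \ref{main2}, and the only point needing care is the averaging step, where one should square the Cauchy--Schwarz bound \emph{before} averaging so that transitivity and orthonormality apply cleanly (averaging the square root directly would instead cost an extra application of Jensen's inequality). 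It is also worth confirming explicitly that the nonnegativity of the eigenvalues is precisely what makes both the profile $\Phi_x$ and the functions $H_i$ well defined for the non-integer exponent $t_n + c w_n$.
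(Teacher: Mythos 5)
Your proof is correct and takes essentially the same approach as the paper, which obtains Theorem \ref{main4} by repeating the argument of Theorem \ref{main2} and averaging over $x$ so that transitivity (making $d_x$ independent of $x$) and orthonormality ($\sum_x \pi(x) f_i^2(x) = 1$) erase the $f_i^2(x)$ factors, exactly as in the passage from \eqref{bb} to \eqref{bbb}. The only immaterial difference is that you average before applying the Mean Value Theorem rather than after, which changes nothing since the monotonicity step makes the exponent $s_1$ uniform in $i$ either way.
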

\section{Examples}\label{exa}
In this section, we give examples of Markov chains some of which satisfy the assumptions of Theorem \ref{main2} and their limit profile is known (and is indeed continuous). We also give examples, where the limit profile is not known to exist, but the assumptions of Theorem \ref{main2} are satisfied. In this case, if the limit profile exists, then it must be continuous. Finally, we provide examples of Markov chains whose limit profiles are known (and are continuous), but we don't know if \eqref{cond} or \eqref{cond2} is satisfied. 

\begin{remark}
For the following Markov chains, if the eigenvalues are not non-negative, then we can consider the lazy versions, that is with probability $1/2$ we do nothing and with probability $1/2$ we perform the transition matrix $P$.
\end{remark}

\subsection{Lazy random walk on the hypercube}
Consider the discrete hypercube $X_n=\{0,1\}^n$. The simple random walk suggests that we pick a coordinate uniformly at random and we update it to $0$ with probability $1/2/$, or we update it to $1$ otherwise. 

Example 2 of Chapter 3C of \cite{PersiBook} explains that the cutoff time is $\frac{1}{2} n \log n$ and the window is $n$ and that the eigenvalues are $\beta_i= 1- \frac{i}{n}$ for $1\leq i \leq n$. One can find a proof of the fact that
\[\Phi_x(c)= 2 \Phi \left(\frac{1}{2}e^{-c}\right)-1, \]
where $\Phi$ stands for the standard, normal distribution, in Appendix 5.1 of \cite{NT}. This is a continuous function.

Checking condition \eqref{cond4} is easy:
\begin{align}
\lim_{n\rightarrow \infty}n \bigg \lbrace \sum_{i=1}^{n} {n \choose i} \log^2 \left( 1-\frac{i}{n} \right)  \left(1- \frac{i}{n}\right)^{ n \log n+2 c n)} \bigg \rbrace^{1/2}  &\leq \lim_{n\rightarrow \infty}2n \bigg \lbrace \sum_{i=1}^{n} \frac{n^i}{i!} \frac{i^2}{n^2} e^{- i \log n - 2ci} \bigg \rbrace^{1/2}\cr
&\label{cont}\leq  \lbrace 2e^{e^{-c}}(e^{-2c} +e^{-c}) \rbrace^{1/2} ,
\end{align}
where in \eqref{cont} we use the fact that $xe^x +x^2e^x= \sum_{i=0}^{\infty} \frac{i^2 x^i}{i!}$. 
In particular, \eqref{cond4} is satisfied!

\subsection{Random Walks on Ramanujan Graphs}
Lubetzky and Peres \cite{LP} proved that the simple random walk on any Ramanujan graph on $n$ vertices and fixed degree $d \geq 3$ exhibits cutoff at $\frac{d}{d-2} \log_{d-1} n$ and window $\sqrt{\log_{d-1}n} $. They also determine that the limit profile is given by 
\[\Phi(c)= \prob{Z>\alpha c},\]
where $\alpha= \frac{(d-2)^{3/2}}{2 \sqrt{d(d-1)}}$ and $Z$ is a standard, normal random variable. In this example, \eqref{cond} or \eqref{cond2} are not satisfied. However, the limit profile is indeed a continuous function. 

On the other hand, for a version of a non--backtracking random walk on Ramanujan graphs with girth at least $\delta \log_{d-1} n$ (for fixed $\delta$), \cite{NeSa} proved that cutoff occurs at $\log_{d-1} n$ with constant window. The transition matrix of this walk is symmetric and the analysis is done via analyzing the $\ell_2$ norm with $w_n$ and all $\beta_i$'s are known to be constant on $n$, so that \eqref{cond2} is indeed satisfied. In particular, using equation $(25)$ of \cite{NeSa}, \eqref{cond2} turns out to be 
\[\lim_{n\rightarrow \infty}w_n \bigg \lbrace \sum_{i=2}^{\vert X_n \vert} f^2_i(x)  \log^2 ( \beta_i )  \beta_i^{2(t_n + c w_n)} \bigg \rbrace^{1/2}  \leq C_{d,\delta} \left( \frac{n}{(d-1)^{t_n}}\right)^{1/2},\]
where $C_{d,\delta}$ is a constant depending on $d$ and $\delta$.
Therefore, if the limit profile exists, Theorem \ref{main2} says that it must be continuous.

\subsection{Random--to--random}
Consider a deck of $n$ cards. According to the random--to--random card shuffle, one picks a card uniformly at random, removes it from the deck and inserts it back to a uniformly at random chosen position. Cutoff at $\frac{3}{4} \log n - \frac{1}{4} n \log \log n$ with window $n$ was proven in \cite{BN, Subag}. The transition matrix was diagonalized by Dieker and Saliola \cite{DSaliola} and this is how we know that $t_{\textup{rel}}=n$. All eigenvalues are non-negative as explained in \cite{BN}. The upper bound analysis of \cite{BN} is in terms of the $\ell_2$ norm, and can be adjusted to prove that condition \eqref{cond4} is met, because adjusting equation $(25)$ of \cite{BN} gives
\[\lim_{n\rightarrow \infty}w_n^2 \sum_{i=2}^{\vert X_n \vert}  \log^2 ( \beta_i )  \beta_i^{2(t_n + c w_n)}  \leq 3e^{-2c}+ 2 \sum_{\ell=0}^{\infty} \ell^2 e^{-2(c-1) \ell}= 3e^{-2c} + \frac{4 e^{-2(c-1)}}{(1-e^{-2(c-1)})^3},\]
for $c>1$.
The limit profile of random--to--random (if it exists) is not known. Theorem \ref{main4} says that if the limit profile of random--to--random exists, it must be a continuous function for $c > 1$.

\subsection{Random and star transpositions, and $k$--cycles}
Consider a deck of $n$ cards. Pick two cards uniformly at randomly at random and independently and swap them. This is the random transpositions shuffle. Diaconis and Shahshahani proved that it exhibits cutoff at $t_n= \frac{1}{2} n \log n$ with window $w_n= \frac{n}{2}$, which is also equal to $t_{\textup{rel}}$. Their proof relys on studying the $\ell_2$ distance.

Teyssier \cite{Teyssier} determined the limit profile for random transpositions is given by
\begin{align}
\label{lim}
    \Phi(c)= d_{\textup{T.V.}} \left( \textup{Poisson(1)},\textup{Poisson}(1+e^{-c})\right),
\end{align}
where $d_{\textup{T.V.}}$ is the total variation distance. This is a continuous function. To check if condition \eqref{cond4} holds, we would need to consider the lazy version of the walks, which would shift the cutoff time. The $\ell_2$ analysis, however, is very similar and in particular \eqref{cond4} can be shown to be satisfied.

The star transposition shuffle suggests to pick a card uniformly at random and transpose it with the top card. Flatto, Odlyzko and Wales \cite{FOW} proved that $t_{\textup{rel}}= n$ and Diaconis \cite{SFlour} proved that the star transposition shuffle exhibits cutoff at $n \log n$ with window $n$. The proof is also spectral, meaning that \eqref{cond4} is satisfied, after we consider the lazy version of the card shuffle. In \cite{lcomp}, it was proven that the limit profile for star transpositions is also given by \eqref{lim}, which is a continuous function.

Hough \cite{Hough} studied the $k$--cycle shuffle which is the random walk on $S_n$ generated by the conjugacy class of $k$--cycles. He proved that for $k=o(n/ \log n)$, the $k$--cycle shuffle exhibits cutoff at $\frac{1}{k} n \log n $ with window $w_n=t_{\textup{rel}}= \frac{n}{k}$. In particular, his analysis can be adjusted to prove that \eqref{cond4} is satisfied, after we consider the lazy version of the shuffle. In \cite{NT}, the limit profile is also proven to be given by \eqref{lim}.

\subsection{The Bernoulli--Laplace chain}
Let $n \in \mathbb{N} $ and $ 1 \ll k \leq n/2$.
On the Bernoulli--Laplace model, we consider two urns, one containing initially $k$ red balls and the other one containing $n-k$ blue balls. At each step, we pick a ball from each urn uniformly at random and we swap them. Diaconis and Shahshahani \cite{Ber1} proved that this chain exhibits cutoff at $\frac{1}{2} n \log \min \{k, \sqrt{n} \}$ with window $n$. For $ k$ of order $n$, the eigenvalues are positive and adjusting the $\ell_2$ analysis of \cite{Ber1}, we get that 
\[\lim_{n\rightarrow \infty}w_n^2 \sum_{i=2}^{\vert X_n \vert}  \log^2 ( \beta_i )  \beta_i^{2(t_n + c w_n)}  \leq A \lim_{n\rightarrow \infty} n^2 \sum_{i=1}^{k} \frac{i^2(n-i+1)^2}{k^2(n-k)^2}  \frac{e^{-ci+ \frac{i(i-1)}{n}}}{i!},\]
for some constant $A$.
Using the fact that $k$ is of order $n$, we get
\[\lim_{n\rightarrow \infty}w_n^2 \sum_{i=2}^{\vert X_n \vert}  \log^2 ( \beta_i )  \beta_i^{2(t_n + c w_n)}  \leq A'  \sum_{i=1}^{\infty} i^2 \frac{e^{-(c-1)i}}{i!},\]
for another constant $A'$. The right hand side is continuous for every $c \in \mathbb{R}$. This implies that the limit profile is continuous.

Indeed, Olesker--Taylor and Schmid \cite{SOT} proved that in this regime, the limit profile is 
\[ \Phi (c) = \Vert N(e^{-2c},1)-N(0,1)\Vert_{\textup{T.V.}},\]
where $N(\mu, 1)$ is a normal distribution with mean $\mu$ and variance one. And this is a continuous function.

\subsection{Biased Card Shuffling and ASEP}
The Metropolis biased card shuffling (also called the multi-species ASEP on a finite interval or the random Metropolis scan) is a card shuffle where if the cards at positions $i$ and $i+1$ are sorted, then with rate 1 we swap them; otherwise, we swap them with rate $q$. This Markov chain is known to be reversible with respect to the Mallows measure of size $n$. Labb\'e and Lacoin \cite{LL} proved that it exhibits cutoff at $\frac{2}{1-q}n$ and Zhang \cite{Zhang} proved that the window is $2^{-2/3}n^{1/3}$ and that the limit profile is given by
\[\Phi(c)= 1- F_{\textup{GOE}}(c),\]
where $F_{\textup{GOE}}$ is the Tracy-Widom GOE distribution.
The $\ell_2$ analysis is still open and condition \eqref{cond} is not known to be satisfied, yet knowing the limit profile formula, one can check that it is indeed continuous.

Similarly for the asymmetric exclusion process (ASEP), we consider $k$ particles on a segment of size $n$ Each particle waits an exponential time with
parameter 1, after which with fixed probability $p > 1/2$ it attempts to make a unit
step to the right, and with probability $q = 1 -p < 1/2$ it attempts to make a
unit step to the left, always obeying the exclusion rule that each site can be occupied by at most one particle. Labb\'e and Lacoin \cite{LL}proved that ASEP exhibits cutoff at $(\sqrt{k} + \sqrt{n-k})^2/(p-q)$ with window $n^{1/3}/(p-q)$. Bufetov and Nejjar \cite{BuN} proved that for $\frac{k}{n} \rightarrow \alpha$ the limit profile is given by 
\[\Phi(c)= 1-F_{\textup{GUE}}(c f(\alpha)),\]
where $F_{\textup{GUE}}$ is the GUE Tracy-Widom distribution and $f(\alpha)=\frac{(\alpha(1-\alpha))^{1/6}}{(\sqrt{\alpha} + \sqrt{1-\alpha})^{4/3}}$. The $\ell_2$ analysis around the cutoff time is still open, so we don't know if \eqref{cond} is satisfied, but the limit profile in this case is indeed continuous. 

It is worthwhile to mention that He and Schmid \cite{HS} studied the limit profile for ASEP with one open boundary, and while \eqref{cond} is not known to be satisfied and the limit profile formulas they found are all continuous.

\section{Acknowledgements}
The author would like to thank Dominik Schmid and Lucas Teyssier for their very useful comments.

 \bibliographystyle{plain}
\bibliography{continuity}

\end{document}